	\theoremstyle{plain}
\newtheorem{theorem}{Theorem}
\theoremstyle{definition}\newtheorem{definition}[theorem]{Definition}
\theoremstyle{remark}\newtheorem{remark}[theorem]{Remark}
\newcommand{\EE}{\mathbb{E} }
\newcommand{\RR}{\mathbb{R} }
\newcommand{\ff}{\mathcal{F} }
\author{Michael R. Tehranchi}
\date{\today}
\thanks{ \textit{Key words and phrases: Brownian motion, harmonic function, Laplace equation, eikonal equation}}
\title{If $B$ and $f(B)$ are Brownian motions, then $f$ is affine}
\begin{document}

\begin{abstract}  It is shown that if   the processes 
$B$ and $f(B)$ are both Brownian motions (without a random time change) then $f$ must be an affine function.
As a by-product of the proof, it is shown that the only functions which
are solutions to both the Laplace
equation and the eikonal equation are  affine.
\end{abstract}

\maketitle

\section{Statement of results}
Suppose that the process $B$ is a Brownian motion and that the function $f$ is affine. 
Then the process $f(B)$ is again a Brownian motion.  This short note proves the converse:
if both $B$ and $f(B)$ are Brownian motions, then $f$ must be affine.

To be precise, we will use the following definition of Brownian motion:
\begin{definition}\label{def:BM}  The continuous process $B = (B_t)_{t \ge 0}$ is called a $n$-dimensional Brownian motion in a filtration 
$\ff = (\ff_t)_{t \ge 0}$ iff there exists a $n$-dimensional vector $b$ and $n\times n$ non-negative definite
matrix $A$  such that for all $0 \le s \le t$
the conditional distribution of the increment $B_{t}-B_s$ given $\ff_s$ is normal with mean $(t-s)b$ and covariance
matrix $(t-s)A$. 

 A Brownian motion is standard iff $B_0=0$, $b=0$ and $A$ is the $n\times n$ identity matrix.
\end{definition}

The main result of this note is this theorem:
\begin{theorem} \label{th:BM} Suppose $B$ is an $n$-dimensional Brownian motion in the filtration $\ff$ with 
non-singular diffusion matrix $A$.   Suppose
the process $f(B) = ( f(B_t))_{t \ge 0}$ is an $m$-dimensional Brownian motion in the same filtration $\ff$ for a measurable function
$f:\RR^n \to \RR^m$.   Then 
$$
f(x) = Px + q
$$
for some $m \times n$ matrix $P$ and $q \in \RR^m$.
\end{theorem}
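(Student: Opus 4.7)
My plan has three stages: (i) promote $f$ from merely measurable to $C^\infty$; (ii) derive two PDEs via It\^o's formula; and (iii) deduce from the PDEs that every component of $f$ is affine. For stage (i), I would observe that $(f(B_t)-tb')_{t\ge 0}$ is an $\ff$-martingale, so $\EE[f(B_t)\mid \ff_s]=f(B_s)+(t-s)b'$. Combining this with the Markov-type identity $\EE[f(B_t)\mid\ff_s]=(P_{t-s}f)(B_s)$, where $P_\tau$ denotes convolution with the non-degenerate Gaussian density of $N(\tau b,\tau A)$, and using that $B_s$ has a strictly positive density on $\RR^n$, I conclude $(P_\tau f)(x)=f(x)+\tau b'$ for Lebesgue-a.e.\ $x$ and every $\tau>0$. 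The finite second moment of $f(B_\tau)$ places $f$ in $L^2$ of every non-degenerate Gaussian measure, so the convolution $P_\tau f$ is $C^\infty$; hence $f$ coincides a.e.\ with a smooth representative $\tilde f$, and the combination of path continuity of $f(B)$ with the positive density of $B_t$ at each $t>0$ shows $\tilde f(B)$ is indistinguishable from $f(B)$.

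Next, It\^o's formula applied to the smooth version gives
\begin{equation*}
df_i(B_t) = \nabla f_i(B_t)\tr\, dB_t + (Lf_i)(B_t)\,dt, \qquad L := \tfrac12 \mathrm{tr}(A\,D^2\cdot) + b\cdot\nabla,
\end{equation*}
and matching this against the decomposition of a Brownian motion with drift $b'$ and covariance $A'$---whose drift $tb'$ and quadratic variation $tA'$ are both deterministic---forces
\begin{equation*}
(Lf_i)(x)=b'_i \quad\text{and}\quad \nabla f_i(x)\tr A\,\nabla f_i(x)=A'_{ii} \quad\text{for every } x\in\RR^n.
\end{equation*}
These are precisely the Laplace-type and eikonal-type equations foreshadowed in the abstract.

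For stage (iii), I fix $i$, set $H:=D^2 f_i$ and $\phi:=\nabla f_i\tr A\,\nabla f_i$, and compute $L\phi$. The cross terms involving $b$ cancel precisely because the first PDE makes $Lf_i$ constant (so $\nabla \mathrm{tr}(AH)=-2Hb$), leaving $L\phi=\mathrm{tr}((AH)^2)$. Since the second PDE makes $\phi$ constant, $L\phi=0$ and hence $\mathrm{tr}((AH)^2)=0$. Writing $A=CC\tr$ with $C$ invertible (possible as $A$ is positive definite), the symmetric matrix $S:=C\tr H C$ satisfies $\mathrm{tr}((AH)^2)=\mathrm{tr}(S^2)=\|S\|_F^2$, so $S=0$ and thus $H=0$. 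Every component of $f$ is therefore affine.

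The step I expect to require the most care is stage (i): while the heat-semigroup regularisation is routine, one must both verify the smoothness of $P_\tau f$ and check that the smooth representative actually produces the \emph{same} Brownian process $f(B)$ rather than merely agreeing with it at each fixed time. Once $f$ is $C^\infty$, the passage from the two PDEs to the conclusion is a one-shot Bochner-type identity.
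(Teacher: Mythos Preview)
Your proposal is correct. Stages (i) and (ii) track the paper closely: the paper also regularises $f$ by writing it as a Gaussian convolution (using the martingale identity $\EE[f(B_t)\mid\ff_s]=f(B_s)+(t-s)\mu$), and then reads off the eikonal equation from the quadratic variation via It\^o's formula. The genuine divergence is in stage (iii). The paper does \emph{not} compute $L\phi$; instead it differentiates the convolution identity to obtain a mean-value representation for the gradient,
\[
\nabla f(x)=\int \nabla f(y)\,\phi(\tau,x,y)\,dy,
\]
observes that the eikonal equation makes the integrand have constant norm equal to the norm of the average, and invokes the strict convexity of the Euclidean norm (Jensen) to force $\nabla f$ to be a.e.\ constant. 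Your Bochner-type identity $L\phi=\mathrm{tr}((AH)^2)=\|C\tr H C\|_F^2$ is a valid alternative: it is purely local, uses both PDEs simultaneously (the drift equation enters through $\nabla\mathrm{tr}(AH)=-2Hb$, which is exactly what makes the cross terms cancel), and yields $H\equiv 0$ in one shot. The Jensen route is arguably slicker and is what lets the paper recycle the same idea for the companion result on harmonic solutions of the eikonal equation on a domain; your route is more computational but avoids the global integral representation and handles the anisotropic case $A\neq I$ more transparently.
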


There are a number of similar results already in the literature.  For instance,  
  Dudley \cite{D} showed that if $B$ is a  {one-dimensional}
standard Brownian and 
$f: \RR \to \RR$ is a continuous function such that  law of the process $f(B)$ is absolutely continuous with respect to
the law of $B$, then necessarily $f(x)= x$ or $f(x)=-x$.  This implies our Theorem \ref{th:BM}
in the case $n=1$.

When $B$ is an $n$-dimensional standard Brownian motion,
    Bernard,   Campbell \&  Davie \cite{BCD} studied functions $f: \RR^n \to \RR^m$
		such that $f(B)$ is a standard Brownian motion up to a random time change.  For
		instance, it is easy to see by the Dambis--Dubins--Schwarz theorem (see, for instance,
		Section 3.4.B of Karatzas \& Shreve's book \cite{KS}) 
		that 
	in the case $m=1$, it is sufficient that $f$ is harmonic with $f(0)=0$.  In particular, we do not allow time change
	in our Theorem \ref{th:BM}, and hence more structure
	is imposed on the function $f$.

 Letac \&  Pradines proved that if $f: \RR^n \to \RR^m$ is such that
$f(x+ \sqrt{t} Z)$ has the normal distribution for all $x \in \RR^n$ and $t \ge 0$, where 
$Z$ is an $n$-dimensional standard normal random vector, then $f$ is necessarily
equal to an affine function almost everywhere.  At first look, it would seem that
Letac \& Pradines's result would imply our Theorem \ref{th:BM} since if $f(B)$ is a Brownian
motion then $f(B_t)$ is normally distributed for all $t \ge 0$.  However,
the implication is not entirely obvious, thanks to the following (perhaps surprising)
result:
\begin{theorem}\label{th:counter}  Let $B$ be an $n$-dimensional standard Brownian motion with $n \ge 2$. 
There exists a continuous non-linear function $g: \RR^n \to \RR^n$  
 such that the random vectors $g(B_t)$ and $B_t$ have the same law for each $t \ge 0$.  
\end{theorem}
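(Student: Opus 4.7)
The plan is to exploit the rotational symmetry of the Gaussian law $N(0, tI)$ of $B_t$: any map preserving the radial part $|x|$ and acting on each centered sphere by some orthogonal transformation will automatically preserve the one-dimensional marginals, even though it need not preserve the joint distribution of the process.

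Explicitly, I would pick any continuous, nonconstant map $R: [0,\infty) \to O(n)$ and set
$$
g(x) = R(|x|)\, x.
$$
Such an $R$ exists precisely because $n \ge 2$: the group $O(n)$ contains the connected, positive-dimensional Lie group $SO(n)$, so nonconstant continuous paths are plentiful. In the case $n=2$ one can take $R(r)$ to be the planar rotation through a continuous nonconstant angle $\theta(r)$; for $n \ge 3$, embed this $2 \times 2$ rotation block in the upper-left corner and act as the identity on the orthogonal complement. Continuity of $g$ away from $0$ is immediate, and the identity $|g(x)| = |x|$ yields continuity at the origin.

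To verify $g(B_t)$ and $B_t$ have the same law, use the standard polar decomposition $B_t = |B_t|\, U$, where $U := B_t / |B_t|$ is uniformly distributed on the unit sphere $S^{n-1}$ and independent of $|B_t|$ (a consequence of the isotropy of $N(0,tI)$). Then
$$
g(B_t) = |B_t|\, R(|B_t|)\, U,
$$
and since uniform measure on $S^{n-1}$ is invariant under every element of $O(n)$, conditionally on $|B_t| = r$ the vector $R(r)U$ is again uniform on the sphere, independently of $r$. Thus $g(B_t)$ has the same radial-angular decomposition as $B_t$, hence the same distribution.

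Non-affinity is then immediate: since $g(0) = 0$, an affine $g$ would necessarily be linear, $g(x) = Mx$. Substituting $x = r e_i$ into $R(|x|)x = Mx$ gives $R(r)e_i = Me_i$ for every standard basis vector $e_i$ and every $r > 0$, forcing $R$ to be constant and contradicting the choice. The only real obstacle in this argument is producing the nonconstant continuous $R$, which is exactly where the hypothesis $n \ge 2$ is used; for $n = 1$, the group $O(1) = \{-1,+1\}$ is discrete, and only constant $R$ are continuous, consistent with Dudley's result mentioned above.
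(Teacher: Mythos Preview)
Your proof is correct and takes a genuinely different route from the paper's. The paper fixes a single continuous measure-preserving self-map $h$ of the sphere $S^{n-1}$ (for $n=2$, the angle-doubling map $\theta \mapsto 2\theta$) and applies the \emph{same} $h$ on every centred sphere, setting $g(x) = \|x\|\, h(x/\|x\|)$; nonlinearity comes from the nonlinearity of $h$ itself. You instead apply a \emph{different} transformation on each sphere but restrict each one to lie in $O(n)$, so that nonlinearity comes from the variation of $R(r)$ with $r$. Both constructions exploit the isotropy of $N(0,tI)$, but in complementary ways. Your argument is arguably the more elementary, since invariance of the uniform measure under $O(n)$ is automatic and you need not exhibit and verify a nontrivial measure-preserving sphere map. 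On the other hand, the paper's $n=2$ example yields an explicit closed-form $g$ and shows that $g$ need not even be injective (the angle-doubling map is two-to-one), whereas your examples are always homeomorphisms of $\RR^n$.
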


Indeed, the reason that Letac \& Pradines's result does not contradict Theorem \ref{th:counter}
above is that they impose normality  for all $x \in\RR^n$, whereas the mean is fixed at $x=B_0=0$
in Theorem \ref{th:counter}.

The idea of the proof of Theorem \ref{th:BM} is simply an application of the following form of Jensen's inequality:
if $G$ is strictly convex and $\int G(x) \ d \mu = G\left( \int x \ d \mu \right)$ for a probability
measure $\mu$, then $\mu$ is a point mass.
 A similar argument yields a related theorem. 
 We will use the notation  $\| \cdot \|$ for the Euclidean norm 
and $\langle \cdot, \cdot \rangle$ the Euclidean inner product on $\RR^n$.

\begin{theorem}\label{th:eik} Let $D \subseteq \RR^n$ be an open, connected set, 
and suppose $u:D \to \RR$ is a classical solution to both the Laplace equation
$$
\Delta u = 0
$$
and  the eikonal equation
$$
\| \nabla u \| = 1.
$$ 
Then $u(x) = \langle p , x \rangle + q$ for some constants
$p \in \RR^n$ and $q \in \RR$, where $\|p\|=1$.
\end{theorem}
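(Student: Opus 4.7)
The plan is to mirror the strategy advertised for Theorem \ref{th:BM} by combining the mean-value property of harmonic functions with the equality case of Jensen's inequality applied to the strictly convex function $v \mapsto \|v\|^2$ on $\RR^n$.

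First I would record the standard fact that any classical solution of $\Delta u = 0$ on $D$ is automatically smooth, and that each partial derivative $\partial_i u$ is again harmonic. Consequently, the map $\nabla u$ itself satisfies the mean-value property componentwise: for any closed ball $\overline{B_r(x_0)} \subset D$,
$$
\nabla u(x_0) \;=\; \frac{1}{|B_r(x_0)|} \int_{B_r(x_0)} \nabla u(y)\, dy.
$$

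Next I would apply Jensen's inequality with $g(v) = \|v\|^2$ to this identity. The eikonal equation $\|\nabla u\| \equiv 1$ pins both sides to the value $1$:
$$
1 \;=\; \|\nabla u(x_0)\|^2 \;=\; \left\| \frac{1}{|B_r(x_0)|}\int_{B_r(x_0)} \nabla u(y)\, dy\right\|^2 \;\le\; \frac{1}{|B_r(x_0)|}\int_{B_r(x_0)} \|\nabla u(y)\|^2\, dy \;=\; 1.
$$
Applying the equality case of Jensen cited in the paragraph preceding this theorem to the pushforward of normalised Lebesgue measure on $B_r(x_0)$ under $\nabla u$, this pushforward must be a Dirac mass. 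Since $\nabla u$ is continuous, it follows that $\nabla u$ is constant on $B_r(x_0)$.

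Since $x_0$ and $r$ were arbitrary subject to $\overline{B_r(x_0)} \subset D$, the continuous map $\nabla u$ is locally constant on the connected open set $D$, hence globally constant. Setting $p := \nabla u$, the function $x \mapsto u(x) - \langle p, x \rangle$ then has vanishing gradient on the connected set $D$ and so equals some constant $q \in \RR$, while the eikonal equation gives $\|p\| = 1$. The only step that deserves any care is the equality analysis in Jensen; everything else (interior regularity of harmonic functions, the mean-value property for their derivatives, and the passage from local to global constancy on a connected set) is standard, so I do not foresee a substantive obstacle.
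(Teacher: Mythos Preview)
Your proposal is correct and follows essentially the same route as the paper: the mean-value property for $\nabla u$, equality in Jensen forcing $\nabla u$ to be constant on each ball, and then connectedness upgrading this to global constancy. The only cosmetic differences are that you use the genuinely strictly convex function $v \mapsto \|v\|^2$ (the paper applies Jensen to $\|\cdot\|$ itself), you obtain the mean-value identity for $\nabla u$ by invoking harmonicity of $\partial_i u$ rather than differentiating under the integral, and you cite the locally-constant-on-connected principle directly where the paper spells out a finite ball-cover of a path.
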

  Theorem \ref{th:eik} is contained in Lemma 4.1 of 
the recent paper of Garnica, Palmas \& Ruiz-Hernandez \cite{GPR}.  
Their proof  appeals to methods of differential geometry, while the proof given below
only uses Jensen's inequality.  

\begin{remark}  There is little loss in assuming that $u$ is a classical solution
to the Laplace equation.  Indeed, if $u$ is only assumed to be
locally integrable and a solution to the Laplace equation in the sense of distributions, then $u$ is automatically
infinitely differentiable, and in particular, a classical solution to the Laplace equation.
 See Section 9.3 of Lieb \& Loss's textbook \cite{LL}.
\end{remark}

\section{Proofs}
In this section, we prove the  results presented above.

\begin{proof}[Proof of Theorem \ref{th:BM}] 
Since every component of a vector-valued Brownian motion is a scalar
Brownian motion, it is sufficient to
consider the case $m=1$.    

First we show that $f$ is smooth.
Now since the conditional distribution of $f(B_t)$ given $\ff_0$ is
normal, we can conclude that
$$
\EE\big[  | f( B_t) | \ \big| \ff_0 \big] < \infty
$$
a.s. for all $t \ge 0$.  In particular,  
we have the growth bound  
\begin{equation}\tag{$*$}
x \mapsto f(x) e^{- \epsilon \| x\|^2 } \mbox{ is Lebesgue integrable on } \RR^n
\end{equation}
for all $\epsilon > 0$.  Now since $f(B)$ is a Brownian motion, 
there is a constant $\mu \in \RR$ such that
$$
\EE[ f(B_t) | \ff_s]  = (t-s) \mu + f(B_s),
$$ 
and hence we have the representation
$$
f(x) = - \tau \mu +    \int f( y ) \phi(\tau, x, y)   dy
$$
where 
$$
\phi(\tau, x, y) =(2\pi \tau)^{-n/2} \det(A)^{-1/2} \exp\left( -\frac{1}{2\tau} \langle y-b\tau -x, A^{-1}  (y-b\tau -x) \rangle \right)
$$
is the Brownian transition density.  
 But by the boundedness property ($*$) and the smoothness of 
 $x \mapsto \phi(t, x, y)$  combined with the dominated convergence
theorem,  the function $f$ is differentiable.
Furthermore, its gradient $\nabla f$ has the representation
$$
\nabla f(x) =      \int \nabla f( y ) \phi(\tau, x, y)   dy
$$
and  also satisfies the boundedness property ($*$).  By
iterating this argument, we see that $f$ is infinitely differentiable.

Now we show that $f$ must satisfy an eikonal equation. Note that It\^o's formula says 
$$
df(B_t) =  \langle \nabla f(B_t), dB_t \rangle + \frac{1}{2} \Delta f (B_t)  dt.
$$
  Since $f(B)$ is a Brownian motion, the quadratic variation is
$$
[ f(B) ]_t = \int_0^t \|    \nabla f(B_s)\|^2 ds = \sigma^2 t
$$
for some constant $\sigma \ge 0$.  Hence $\nabla f$ is a solution of the 
eikonal equation
$$
\| \nabla f \| = \sigma.
$$
almost everywhere.  But since $f$ is smooth, it solves the eikonal equation everywhere.
Now note that
$$
\sigma^2 = \| \nabla f(x) \|^2 = \int \left\| \nabla f( y)  \right\|^2 \phi(\tau, x, y) dy.
$$
Since the squared Euclidean norm is strictly convex, Jensen's inequality says that for
every $x$ there exists  a vector $p_x \in \RR^n$, possibly
depending on $x$, such that $\nabla f(y) = p_x$ a.e $y \in \RR^n$.  Since $\nabla f$
is continuous, we must have $\nabla f(y) = p$ for all $y$ and for some constant vector $p$. 
Hence $f(y) = \langle p,  y \rangle + q$ as claimed.
 \end{proof}
 
 We now proceed to the proof of the Theorem \ref{th:eik}.  It follows the same pattern,
 but  it differs in a few details which we spell out for completeness.

\begin{proof}[Proof  of Theorem \ref{th:eik}] 
 We will show that there is a unit vector $p$ such that
$\nabla u(x) = p$ everywhere in $D$.   Below we will use the notation 
$B =\{ x \in \RR^n : \|x \| < 1 \}$ to denote the open unit
ball in $\RR^n$, and hence $x + rB$ denotes the ball of radius $r \ge 0$ centred
at the point $x \in \RR^n$.

 Since $u$ is harmonic,
it is well known (again, see Section 9.3 of \cite{LL}) that $u$ has the mean-value property: for
every constant $r > 0$ such that $x+ r B \subseteq D$ we have
$$
u(x) = \frac{1}{r^n V} \int_{rB} u( x + y) dy  
$$
where 
$$
V = \frac{ \pi^{n/2}}{\Gamma(n/2)}
$$
denotes the Lebesgue measure of the unit ball $B$.
 Since $u$ is 
continuously differentiable in $D$, the gradient $\nabla u$ is bounded on compact sets, so the dominated convergence
theorem allows us to differentiate both sides of the above equation, yielding
$$
\nabla u(x) = \frac{1}{r^n V} \int_{rB} \nabla u( x + y) dy  
$$
Now for each $x \in \RR^n$,  note that
$$
1  = \| \nabla u(x) \|^2 = \frac{1}{r^n V} \int_{rB} \left\|  \nabla u( x + y) \right\|^2 dy.  
$$
Again, since the squared Euclidean norm is strictly convex,
Jensen's inequality says that there is a vector $p_x$, possibly
depending on $x$, such that $\nabla u(z) = p_x$ a.e $z  \in x + rB$ and $\|p_x\| = 1$.  Since $\nabla u$
is continuous, we must have $\nabla u(z) = p_x$ for all $z$ such that $\|x-z \| \le r$.  

Now fix two points $x$ and $x'$ in $D$.  
Since $D$ is open and connected, there exists a path $C \subseteq D$ connecting them.
Hence there exists a finite number of points
$x= x_1, \ldots, x_N = x' \in D$ and radii $r_1, \ldots r_N > 0$ such that
$\{ x_i + r_iB \}_{i=1}^N$ is a cover of the compact set $C \subseteq D$.
In particular, the $p_x = p_{x'}$ and hence $\nabla u$ is constant on $D$ as claimed.  
\end{proof}

Finally, we construct an example of the function $g$ claimed to exist in Theorem \ref{th:counter}.

\begin{proof}[Proof of Theorem \ref{th:counter}]
Let $S^{n-1} = \{ x \in \RR^n: \| x \|= 1\}$ be the unit $(n-1)$-dimensional sphere and let $\lambda$ be
the uniform probability measure  on $S^{n-1}$.
Let $h: S^{n-1} \to S^{n-1}$ be a continuous $\lambda$-preserving transformation. Finally, let
$g(0) = 0$ and 
$$
g(x) = \| x \| \  h \left(   \frac{ x}{ \| x \|} \right)  
$$
when $x \ne 0$.

Fix a bounded and measurable function $\varphi: \RR^n \to \RR$ and  $t \ge 0$. Using 
the assumption that the transformation $h$ preserves the measure $\lambda$, we have
\begin{align*}
\EE[ \varphi \circ g(B_t) ] & = 
\int_{\RR^n} \varphi \left[ \sqrt{t} \| x \| h \left( \frac{ x}{\|x\|} \right) \right] \frac{e^{-\|x\|^2/2}}{(2\pi)^{n/2}} dx  \\
&= \int_0^{\infty} \int_{S^{n-1}} \varphi \left[ \sqrt{t}  r \ h \left(u \right) \right] \frac{r^{n-1} e^{-r^2/2}}{2^{n/2-1} \Gamma(n/2)}   \lambda(du) \ dr   \\
&= \int_0^{\infty} \int_{S^{n-1}} \varphi ( \sqrt{t} r  u   ) \frac{r^{n-1} e^{-r^2/2}}{2^{n/2-1} \Gamma(n/2)}  \lambda(du) \ dr   \\
&= \int_{\RR^n} \varphi ( \sqrt{t} x  ) \frac{e^{-\|x\|^2/2}}{(2\pi)^{n/2}} dx  \\
& = \EE[ \varphi( B_t)]
\end{align*}
where we have used the polar coordinates $x = r u$ with $r \ge 0$ and $u \in S^{n-1}$.   Hence $g(B_t)$ and $B_t$
have the same law for each $t \ge 0$.

 To show that there exist at least one
 function $h$ which is non-linear,
it is sufficient to consider the case $n=2$ since we may restricting attention to the 
first two coordinates of $B$.   Now let $h: S^1 \to S^1$ be defined by
$h(\cos(\theta), \sin(\theta) ) = (\cos(2 \theta), \sin(2 \theta))$.  It is well-known that this transformation $h$
is measure preserving.  Explicitly, the function $g$ in this case is
$$
g(x_1,x_2) = \left( \frac{ x_1^2 - x_2^2}{\sqrt{x_1^2 + x_2^2}}, \frac{ 2 x_1 x_2}{\sqrt{x_1^2 + x_2^2}} \right)
$$
when $x \ne 0$.
\end{proof}

\noindent
\textbf{Acknowledgment.} I would like to thank Pat Fitzsimmons for 
feedback on an early draft of this note, and in particular, for alerting me to several papers in the literature.
 I would also like to thank Chris Rogers and Arun Thillaisundaram for interesting discussions of this problem, and
 the members of the 
Laboratoire de Probabilit\'es et Mod\`eles Al\'eatoires at Universit\'e Paris 6 for their hospitality
during much of the writing of this note.  Finally, I would like to thank an anonymous referee for helpful
comments.

\end{document}